\documentclass[11pt]{article}
\usepackage[a4paper,margin=28mm]{geometry}
\usepackage[T1]{fontenc}
\usepackage{lmodern}
\usepackage{microtype}
\usepackage{amsmath,amssymb,amsthm,mathtools}
\usepackage{booktabs}
\usepackage{hyperref}
\usepackage{mathrsfs}
\newtheorem{theorem}{Theorem}[section]
\newtheorem{proposition}[theorem]{Proposition}
\newtheorem{lemma}[theorem]{Lemma}
\newtheorem{corollary}[theorem]{Corollary}
\theoremstyle{remark}
\newtheorem{remark}[theorem]{Remark}
\newtheorem{conjecture}[theorem]{Conjecture}
\newtheorem{problem}[theorem]{Problem}

\newcommand{\mdr}{\operatorname{mdr}}
\newcommand{\AR}{\operatorname{AR}}
\newcommand{\ii}{\mathrm i}
\newcommand{\Sing}{\operatorname{Sing}}

\title{On the Numerical Terao Conjecture}
\author{Piotr Pokora}
\date{\today}

\begin{document}
\maketitle

\begin{abstract}
We prove that the Numerical Terao Conjecture holds for even-degree conic-line arrangements having only ADE singularities. We then show that the conjecture fails in the broader quasi-homogeneous setting once ordinary quadruple points are admitted, by constructing a degree-nine counterexample consisting of a pair of conic-line arrangements, each with seven lines and one smooth conic, with the same weak combinatorics
\[
W(\mathcal{CL}) = (7,1;\,8A_1+D_4+4X_9).
\]
We show that one curve is free with exponents $(4,4)$, whereas the other is nearly free with exponents $(3,6)$. This yields a counterexample to the strongest known formulation of the Numerical Terao Conjecture \cite{CP}.
\end{abstract}

\section{Introduction}

Let $\mathcal{C} = \{f=0\}$ be a reduced plane curve. We write
\[
 \AR(f)=\{(a,b,c)\in \mathbb C[x,y,z]^{\oplus 3}:\;af_x+bf_y+cf_z=0\}
\]
and denote by $\mdr(f)$ the least degree of a non-zero homogeneous element of $\AR(f)$.  We say that the curve is \textbf{free} with exponents ${\rm exp}(\mathcal{C}) = (d_1,d_2)$ when
\[
 \AR(f)\simeq S(-d_1)\oplus S(-d_2),\qquad d_1+d_2=d-1,
\]
where $d=\deg f$ and $S=\mathbb C[x,y,z]$. Note that ${\rm mdr}(f) = d_{1}$, and we can order exponents $d_{1} \leq d_{2}$.

The weak-combinatorics used below records the number of smooth irreducible
components of each degree and the numbers of singular points of each prescribed
type. In particular, it determines the total Tjurina number
\[
 \tau(\mathcal C)=\sum_{p\in\Sing(\mathcal C)}\tau(\mathcal C,p).
\]
Here we will consider curves admitting ADE singularities and ordinary quadruple points $X_{9}$. Notice that the modulus of an $X_9$-singularity plays no role here: every
ordinary quadruple point has Tjurina number $9$, independently of its modulus.
Thus, in the present setting, the weak combinatorics records only the number
of ordinary quadruple points and not their individual analytic moduli.

The Numerical Terao Conjecture can be formulated as follows.
\begin{conjecture}[Numerical Terao Conjecture]
Let $\mathcal{C}_{1}, \mathcal{C}_{2}$ be two reduced curves in $\mathbb{P}^{2}_{\mathbb{C}}$ with the property that they possess only smooth irreducible components and they admit only quasi-homogeneous singularities. Suppose that $\mathcal{C}_{1}$ and $\mathcal{C}_{2}$ have the same weak-combinatorics and let $\mathcal{C}_{1}$ be free, then $\mathcal{C}_{2}$ is also free.
\end{conjecture}
Let us recall that a singularity is called quasi-homogeneous if and only if there exists a holomorphic change of variables so that the defining equation becomes weighted homogeneous. 

It is known that the Numerical Terao Conjecture is false in the class of line arrangements in the complex projective plane \cite{Mar}. The present note asks what changes when non-linear components are allowed. Its first result gives a positive answer in even degree under an ADE hypothesis. As a corollary we show that maximizing conic-line arrangements form a bounded class of curves, i.e., their total degree is bounded by $18$. The second main result gives a degree-nine counterexample after ordinary quadruple points are allowed. Recall that an ordinary quadruple point, even if it has a modulus, is quasi-homogeneous, and its Tjurina number is always $9$. To the best of our knowledge, this is the first counterexample to the Numerical Terao Conjecture involving a non-linear irreducible component. The present work should be viewed as the culmination of the author’s study of arrangements of lines and one conic with quasi-homogeneous ordinary singularities, initiated in \cite{PokoraOneConic}. All symbolic computations here were performed using \texttt{SINGULAR} \cite{Singular}.

\section{Very concise algebraic preliminaries}

We recall two standard estimates.  Suppose that $ \mathcal{C} = \{f=0\}$ is a reduced plane curve of degree $d$ with only quasi-homogeneous singularities, and let $\alpha_{\mathcal{C}}$ be the smallest Arnold exponent among its singular points. The celebrated Dimca-Sernesi theorem~\cite{DimcaSernesi} gives us the following bound
\begin{equation}\label{eq:DS}
 \mdr(f)\ge \alpha_{\mathcal{C}} \cdot d-2.
\end{equation}
We also use the following du Plessis-Wall upper bound \cite{DPP1} and its freeness characterization in the form recalled in~\cite{DimcaMaxTau,DimcaSticlaru}.
For a reduced plane curve $\mathcal{C} : f=0$ we put $r=\mdr(f)$.  If $r<d/2$, then
\begin{equation}\label{eq:DPW-small}
 \tau(\mathcal{C})\le (d-1)^2-r(d-r-1),
\end{equation}
and equality holds precisely when $\mathcal{C}$ is free.  If $r\ge d/2$, then the
refined bound gives us
\begin{equation}\label{eq:DPW-large}
 \tau(\mathcal{C})\le (d-1)^2-r(d-r-1) -\binom{2r-d+2}{2}.
\end{equation}

\section{A structure theorem on conics and lines with ADE singularities}

\begin{lemma}\label{lem:types}
Let $\mathcal{CL}$ be a reduced plane arrangement of lines and smooth conics having only ADE singularities.  Then every singular point is of one of the following types:
\[
 A_1,A_3,A_5,A_7,\qquad D_4,D_6,D_8,D_{10}.
\]
Consequently,
\[
 \alpha_{\mathcal{CL}}\ge \alpha(D_{10})=\frac59.
\]
\end{lemma}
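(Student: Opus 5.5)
The plan is to classify singular points by two local invariants: the number of smooth branches through the point, and the pairwise intersection multiplicities of those branches. Every irreducible component is a line or a smooth conic, so at any point $p$ every local branch is smooth. Two distinct components meet at $p$ with local intersection multiplicity at most $2$ if one of them is a line, since a line meets a conic in at most two points counted with multiplicity. Two distinct smooth conics meet with multiplicity at most $4$, by B\'ezout. A singular point is then a union of $k\ge 2$ smooth branches.

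First, I would use the ADE hypothesis to bound $k$. An ADE singularity has multiplicity at most $3$, so $k\le 3$.

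For $k=2$, the singularity is $A_{2m-1}$, where $m$ is the intersection multiplicity of the two branches. Hence $m\in\{1,2,3,4\}$ gives $A_1,A_3,A_5,A_7$. The value $m=4$ occurs only for two conics.

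For $k=3$, three smooth branches with multiplicity $3$ give a $D$-type point. I would use the standard normal form: the three tangent lines are either pairwise distinct, giving $D_4$, or exactly two branches are tangent. They cannot all be tangent, since three pairwise tangent smooth branches give a non-simple point; its $3$-jet is a cube, leading to $E$-type or worse, and an $E$-type singularity is not a union of three smooth branches. If two branches are tangent with contact order $m\ge 2$ and the third is transversal, the singularity is $D_{2m+2}$. This follows since $\delta = m+2$ and $\mu = 2\delta - r + 1 = 2m+2$ with $r=3$ branches. Since $m\le 4$, we get $D_6,D_8,D_{10}$.

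The step requiring the most care is this $k=3$ analysis, namely excluding a common tangent for all three branches. I would argue via Milnor numbers. A triple point with a common tangent has $\mu\ge 2\delta-2$ with $\delta\ge 2+2+2$. The simple triple points with three branches are exactly the $D_{\text{even}}$ series, whose $3$-jet has at most a double linear factor. A triple linear factor forces $E_6,E_7,E_8$ or non-simple, and $E_6$ and $E_8$ are unibranch while $E_7$ has two branches. So three branches with a common tangent are excluded.

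Finally, I would read off the Arnold exponents: $\alpha(A_k)=\tfrac12+\tfrac1{k+1}$ and $\alpha(D_k)=\tfrac12+\tfrac1{2(k-1)}$, both decreasing in $k$. The smallest value over the list is therefore $\min\{\alpha(A_7),\alpha(D_{10})\}=\min\{\tfrac58,\tfrac59\}=\tfrac59$. This gives $\alpha_{\mathcal{CL}}\ge\tfrac59$.
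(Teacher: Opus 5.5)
Your proof is correct and follows the paper's route. Both arguments observe that all branches are smooth, bound contact orders by B\'ezout, identify two-branch and three-branch ADE points as $A_{2q-1}$ and $D_{2q+2}$ with $q\le 4$, and compare Arnold exponents to get $\alpha(D_{10})=\tfrac59$. You add justification the paper only asserts: Milnor's formula $\mu=2\delta-r+1$ to pin down the index, and the $3$-jet/$E$-type branch-count argument excluding a common tangent.
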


\begin{proof}
Every local branch of $\mathcal{CL}$ is smooth.  Hence an $A$-type singularity
which can occur in the arrangement has two smooth branches and is of type
$A_{2q-1}$.  The number $q$ is their local intersection multiplicity.  Since
the components have degrees at most two, B\'ezout's theorem gives $q\le4$.
Thus only $A_1,A_3,A_5,A_7$ can occur.

An ADE singularity with three smooth branches is necessarily of type
$D_{2q+2}$: two branches have contact order $q$, and a third branch is
transverse to them.  Again $q\le4$, so the possible types are
$D_4,D_6,D_8,D_{10}$.  The remaining simple singularities have either a
singular local branch or a different branch structure and therefore cannot occur in a union of smooth components. Let us recall from \cite{DimcaSernesi} that for our quasi-homogeneous singularities one has
\[
 \alpha(A_k)=\frac12+\frac1{k+1},
 \qquad
 \alpha(D_k)=\frac{k}{2(k-1)}.
\]
Among the listed types the smallest value is
\[
 \alpha(D_{10})=\frac{10}{18}=\frac59.
\]
\end{proof}

\begin{theorem}\label{thm:even}
Let $\mathcal{CL} = \{f=0\} \subset \mathbb{P}^{2}_{\mathbb{C}}$ be a conic-line arrangement of even degree
$d=2m\ge4$ with only ADE singularities.  Suppose that $\mathcal{CL}$ is free. Then every conic-line arrangement with the same weak combinatorics is free. Consequently, there is no counterexample to the Numerical Terao Conjecture in even degree in the ADE conic-line class.
\end{theorem}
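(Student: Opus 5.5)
The plan is to show that, under the hypotheses, the exponents of a free curve in this class are forced by the weak combinatorics. Freeness of a second curve will then follow from the du Plessis--Wall characterization. Let $\mathcal{CL}'=\{g=0\}$ have the same weak combinatorics as $\mathcal{CL}=\{f=0\}$. Then $\mathcal{CL}'$ is again an even-degree conic-line arrangement of degree $d=2m$ with only ADE singularities, and by the remark in the introduction $\tau(\mathcal{CL}')=\tau(\mathcal{CL})$. Write $(d_1,d_2)$ for the exponents of $\mathcal{CL}$, so $d_1=\mdr(f)$ and $d_1+d_2=d-1=2m-1$; in particular $d_1\le m-1<d/2$. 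The freeness equality in \eqref{eq:DPW-small} gives
\[
\tau := \tau(\mathcal{CL}) = (d-1)^2 - d_1(d-1-d_1).
\]

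The key step is a lower bound on $r=\mdr(g)$. By Lemma~\ref{lem:types}, $\alpha_{\mathcal{CL}'}\ge 5/9$, so \eqref{eq:DS} gives $r\ge \tfrac{5}{9}\cdot 2m-2=\tfrac{10m}{9}-2$. For large $m$ this already exceeds $m$, which would put $r$ in the range $r\ge d/2$. I would then compare $\tau$ with the refined bound \eqref{eq:DPW-large}, which forces $\tau(\mathcal{CL}')$ strictly below $(d-1)^2-r(d-r-1)$. Applying the same Dimca--Sernesi bound to the free curve $\mathcal{CL}$ gives $d_1\ge \tfrac{10m}{9}-2$. Combined with $d_1\le m-1$, this yields $m/9\le 1$, so $m\le 9$ and $d\le 18$. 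This is presumably the origin of the boundedness corollary. Thus free curves in this class exist only for $d\le18$, and there $d_1\in[\lceil 10m/9-2\rceil,\,m-1]$.

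Next I handle the case $r<d/2$ for $\mathcal{CL}'$, so $r\le m-1$. The function $\phi(r)=(d-1)^2-r(d-1-r)$ is strictly decreasing for $r\le (d-1)/2$. By \eqref{eq:DPW-small}, $\tau\le\phi(r)$, hence $\phi(d_1)\le\phi(r)$ and so $r\le d_1$. For the reverse inequality, the natural tool is $\mdr$ lower bounds coming from the Tjurina number: the du Plessis--Wall lower bound $(d-1)(d-r-1)\le\tau$. I would check that with $\tau=\phi(d_1)$ this forces $r\ge d_1$. Indeed, $(d-1)(d-1-r)\le (d-1)^2-d_1(d-1-d_1)$ gives $r(d-1)\ge d_1(d-1-d_1)$, which is weaker than needed. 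So instead I would exploit the finite range: for $m\le 9$ and $d_1\ge 10m/9-2$, we have $d_1\in\{m-2,m-1\}$ or so, and the Dimca--Sernesi bound applied to $g$ gives the same lower bound $r\ge\lceil 10m/9-2\rceil$. Once $r=d_1$ is established, equality holds in \eqref{eq:DPW-small}, so $\mathcal{CL}'$ is free.

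For the case $r\ge d/2$, I need to show $\tau>(d-1)^2-r(d-r-1)-\binom{2r-d+2}{2}$, a contradiction. At $r=m$ the right side is $\phi(m)-1<\phi(m-1)\le\tau$. For larger $r$ the right side decreases further, since the total expression is a decreasing function of $r$ in that range, which is routine to verify. Hence $r<d/2$ always.

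The main obstacle is closing the gap $r\ge d_1$ when $d_1$ is not at the extreme value allowed by Dimca--Sernesi. In that case I would do a case check over $m\le 9$, listing the admissible pairs $(m,d_1)$. If some pair with $d_1<m-1$ survives, I would refine using the actual minimal Arnold exponent among the singularities present in the common weak combinatorics, since this is determined by it, rather than the crude $5/9$.
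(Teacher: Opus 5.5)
Your strategy matches the paper's: Dimca--Sernesi with $\alpha\ge 5/9$ to control $\mdr$, the equality case of \eqref{eq:DPW-small} to get freeness, and the refined bound \eqref{eq:DPW-large} to exclude $\mdr(g)\ge m$. Your treatment of that last case is correct; the paper computes that for $r=m+s$ the right-hand side equals $\tau(\mathcal{CL})-(s+1)^2$. However, the proposal as written never proves the central inequality $\mdr(g)\ge d_1$. You list it as the ``main obstacle'', guess that $d_1\in\{m-2,m-1\}$ ``or so'', and fall back on an unspecified case check or a sharper Arnold exponent. The guess is wrong, since $\tfrac{10m}{9}-2>m-2$ always. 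The fallback also would not close a genuine gap: a sharper $\alpha$ raises the common lower bound for $f$ and $g$ alike, but cannot force $\mdr(g)=d_1$ if $d_1$ lay strictly above that bound. So the sentence ``once $r=d_1$ is established'' rests on a step you have not supplied.

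The missing step is a one-line computation. Write $\tfrac{10m}{9}-2=(m-2)+\tfrac{m}{9}$. For $1\le m\le 9$ we have $0<\tfrac m9\le 1$, so $\lceil \tfrac{10m}{9}-2\rceil=m-1$. The interval $[\lceil 10m/9-2\rceil,\,m-1]$ you wrote down is therefore the single point $m-1$. The free curve has exponents $(m-1,m)$ and $\tau=(2m-1)^2-m(m-1)$. Dimca--Sernesi applied to $g$ then gives $\mdr(g)\ge m-1=d_1$ directly. Together with your monotonicity argument $\mdr(g)\le d_1$ (or simply with the exclusion of $\mdr(g)\ge m$), this yields $\mdr(g)=m-1$ and equality in \eqref{eq:DPW-small}, so $\mathcal{CL}'$ is free. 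With this observation your proof becomes the paper's; without it the argument is incomplete.
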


\begin{proof}
Set $r=\mdr(f)$.  Since $\mathcal{CL}$ is free of degree $2m$, its exponents
are $(r,2m-1-r)$, and therefore
\begin{equation}\label{eq:r-upper}
 r\le m-1.
\end{equation}
By Lemma~\ref{lem:types} and \eqref{eq:DS},
\begin{equation}\label{eq:r-lower}
 r\ge \left\lceil\frac{10m}{9}-2\right\rceil.
\end{equation}
For $2\le m\le9$ one has
\[
 \left\lceil\frac{10m}{9}-2\right\rceil=m-1.
\]
Combining this with \eqref{eq:r-upper} shows that
\begin{equation}\label{eq:middle-exp}
 r=m-1,
 \qquad
 \exp(\mathcal C)=(m-1,m).
\end{equation}
If $m\ge10$, then the right-hand side of \eqref{eq:r-lower} is at least
$m$, contradicting \eqref{eq:r-upper}.  Hence no free ADE conic-line
arrangement of even degree $d\ge20$ exists.

It remains to prove combinatorial rigidity in the possible degrees
$4\le d\le18$.  Let $\mathcal C':g=0$ have the same weak combinatorics as
$\mathcal C$, and put $r'=\mdr(g)$.  The two curves have the same total
Tjurina number.  By \eqref{eq:middle-exp},
\begin{equation}\label{eq:tau-even-free}
 \tau(\mathcal C')=\tau(\mathcal C)
 =(2m-1)^2-m(m-1).
\end{equation}
The universal Arnold-exponent estimate again gives $r'\ge m-1$. If $r'=m-1$, then \eqref{eq:tau-even-free} is equality in
\eqref{eq:DPW-small}, and therefore $\mathcal C'$ is free.

Assume now that $r'\ge m$, and write $r'=m+s$ with $s\ge0$.  Applying the
refined bound \eqref{eq:DPW-large}, we obtain
\begin{align*}
 \tau(\mathcal C')
 &\le (2m-1)^2-(m+s)(m-1-s)-\binom{2s+2}{2}\\
 &= (2m-1)^2-m(m-1)-(s+1)^2\\
 &=\tau(\mathcal C)-(s+1)^2,
\end{align*}
which contradicts \eqref{eq:tau-even-free}.  Thus the second case is
impossible, and $\mathcal C'$ is necessarily free.
\end{proof}
The above result gives us a surprising boundedness result on maximizing conic-line arrangements. By \cite[Theorem 2.9]{max}, an ADE reduced plane curve of even degree $d=2m$ is maximizing if and only if it is free with exponents
\[
(m-1,m).
\]
\begin{corollary}
Let $\mathcal{CL}\subset \mathbb{P}^2_{\mathbb{C}}$ be a conic-line arrangement of even degree
\[
d=2m\geq 4
\]
having only ADE singularities. If $\mathcal{CL}$ is maximizing, then
\[
d\leq 18.
\]
\end{corollary}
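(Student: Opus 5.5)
The plan is to reduce the corollary directly to Theorem~\ref{thm:even} via the characterization of maximizing curves quoted from \cite[Theorem 2.9]{max}. Let $\mathcal{CL}=\{f=0\}$ be a maximizing conic-line arrangement of degree $d=2m\ge 4$ with only ADE singularities. Since an arrangement of lines and smooth conics with only ADE singularities is in particular a reduced plane curve with only ADE singularities, \cite[Theorem 2.9]{max} applies. It tells us that $\mathcal{CL}$ is free with exponents $(m-1,m)$, so in particular $r=\mdr(f)=m-1$.

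Next I would combine this with the lower bound on $\mdr$ used in the proof of Theorem~\ref{thm:even}. By Lemma~\ref{lem:types} we have $\alpha_{\mathcal{CL}}\ge 5/9$. The Dimca--Sernesi bound \eqref{eq:DS} then gives
\[
 m-1=\mdr(f)\ge \frac{5}{9}\cdot 2m-2=\frac{10m}{9}-2 .
\]
Rearranging yields $m+1\ge 10m/9$, hence $m\le 9$ and $d=2m\le 18$. Equivalently, this is exactly the step in the proof of Theorem~\ref{thm:even} showing that no free ADE conic-line arrangement of even degree $d\ge 20$ exists. I could simply cite that step: freeness forces $r\le m-1$, while \eqref{eq:r-lower} forces $r\ge m$ once $m\ge 10$.

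There is essentially no obstacle. The only point needing care is to check that the hypotheses of \cite[Theorem 2.9]{max} match ours: reduced curve, even degree, only ADE singularities. Note that only the implication ``maximizing $\Rightarrow$ free with exponents $(m-1,m)$'' is used. In fact any free such arrangement already satisfies $r\le m-1$, so the bound $d\le 18$ would follow from freeness alone.
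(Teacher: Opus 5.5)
Your proposal is correct and follows essentially the paper's route: maximizing implies free with exponents $(m-1,m)$ by \cite[Theorem 2.9]{max}, and the Dimca--Sernesi bound together with $\alpha_{\mathcal{CL}}\ge 5/9$ from Lemma~\ref{lem:types} rules out $m\ge 10$, exactly as in the proof of Theorem~\ref{thm:even}. Your explicit computation $m-1\ge 10m/9-2\Rightarrow m\le 9$ simply unpacks the step of that theorem which the paper cites.
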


\begin{proof}
Every maximizing conic-line arrangement is a free
ADE conic-line arrangement. By Theorem \ref{thm:even}, free conic-line arrangements with only ADE singularities are bounded in degree, and necessarily
\[
d\leq 18.
\]
\end{proof}
Finishing this section, we would like to focus on the odd degree case.
\begin{proposition}
\label{odd}
Let $\mathcal{CL} = \{f=0\} \subset \mathbb{P}^{2}_{\mathbb{C}}$ be a conic-line arrangement of odd degree
$$d = 2m+1 \geq 5$$
having only ${\rm ADE}$ singularities. If $\mathcal{CL}$ is free, then
$$d \leq 27.$$
\end{proposition}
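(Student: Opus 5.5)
The plan is to copy the first half of the proof of Theorem \ref{thm:even}: the upper bound on $\mdr$ coming from freeness is played off against the Dimca--Sernesi lower bound \eqref{eq:DS}, using the Arnold exponent estimate of Lemma~\ref{lem:types}.

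Put $r=\mdr(f)$. Since $\mathcal{CL}$ is free of degree $d=2m+1$, its exponents are $(r,\,2m-r)$ with $r\le 2m-r$. Hence
\[
 r\le m .
\]
By Lemma~\ref{lem:types} we have $\alpha_{\mathcal{CL}}\ge 5/9$, and \eqref{eq:DS} then gives
\[
 r\ \ge\ \left\lceil \frac{5(2m+1)}{9}-2\right\rceil=\left\lceil \frac{10m-13}{9}\right\rceil .
\]

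Combining the two bounds, a free arrangement can exist only if $\frac{10m-13}{9}\le m$, that is, $m\le 13$. This gives $d=2m+1\le 27$. As a sanity check at the boundary, $m=13$ gives $(10m-13)/9=117/9=13$, so $r=13=m$ and the exponents are $(13,13)$; this is not excluded. At $m=14$ the lower bound is $127/9>14$, so this case is excluded. The argument is thus the same arithmetic as in the even case, with the threshold shifted because the balanced exponents are now $(m,m)$ rather than $(m-1,m)$.

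There is no real obstacle here; the only care needed is with the ceiling and with the inequality $r\le m$. That inequality comes from ordering the exponents, $d_1\le d_2$ with $d_1+d_2=2m$. One must also make sure the Dimca--Sernesi bound is applied with $\alpha_{\mathcal{CL}}\ge 5/9$, which holds because every singularity is among the listed ADE types, all of which are quasi-homogeneous.
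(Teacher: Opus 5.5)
Your proposal is correct and matches the paper's proof: freeness gives $r\le m$, Lemma~\ref{lem:types} together with the Dimca--Sernesi bound gives $r\ge \frac{10m-13}{9}$, and combining the two yields $m\le 13$, i.e.\ $d\le 27$. The ceiling and the boundary check at $m=13,14$ are correct but not needed for the conclusion.
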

\begin{proof}
Let $r=\operatorname{mdr}(f)$. Since $\mathcal{CL}$ is free of degree
$d=2m+1$, its exponents are $(r,2m-r)$, and hence $r\leq m$.
By Lemma~\ref{lem:types} and the Dimca--Sernesi bound, we have
\[
m\geq r\geq \alpha_{\mathcal{CL}}\cdot d-2
\geq \frac{5}{9}(2m+1)-2
=\frac{10m}{9}-\frac{13}{9}.
\]
Consequently $m\leq 13$, and therefore $d=2m+1\leq 27$.
\end{proof}
Let us recall that by \cite[Proposition 5.1(a)]{max}, an ADE reduced plane curve of odd degree $d=2m+1 \geq 5$ is maximizing if and only if it is free with exponents
\[
(m-1,m+1).
\]
\begin{corollary}
Let $\mathcal{CL}\subset \mathbb{P}^2_{\mathbb{C}}$ be a conic-line arrangement of odd degree
\[
d=2m+1\geq 5
\]
having only ADE singularities. If $\mathcal{CL}$ is maximizing, then
\[
d \in \{5,7,9\}.
\]
\end{corollary}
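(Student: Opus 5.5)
The plan is to combine the characterization quoted just before the statement with the Dimca--Sernesi bound, in the same way as in Proposition~\ref{odd}. By \cite[Proposition 5.1(a)]{max}, a maximizing ADE curve of odd degree $d=2m+1\ge5$ is free with exponents $(m-1,m+1)$. In particular $\mdr(f)=m-1$. This is one less than the upper bound $r\le m$ that was used in Proposition~\ref{odd}, and that extra unit is what should reduce the bound $d\le 27$ to $d\le 9$.

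First, Lemma~\ref{lem:types} gives $\alpha_{\mathcal{CL}}\ge 5/9$, so \eqref{eq:DS} yields
\[
 m-1=\mdr(f)\ge \frac59(2m+1)-2=\frac{10m-13}{9}.
\]
Clearing denominators gives $9m-9\ge 10m-13$, that is $m\le 4$. Hence $d=2m+1\le 9$.

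Second, the hypothesis $d\ge5$ forces $m\ge2$. So the admissible degrees are $d\in\{5,7,9\}$, which is exactly the claim. Note that $d=3$ is excluded by the hypothesis, which is consistent with the exponents $(m-1,m+1)$ being meaningful only for $m\ge 1$.

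There is no serious obstacle. The only point to check is that the Dimca--Sernesi bound is applicable: every ADE singularity is quasi-homogeneous, and Lemma~\ref{lem:types} provides the uniform lower bound on the minimal Arnold exponent. After that, the proof is the one-line inequality above. If one also wanted sharpness, i.e.\ examples realizing each degree $5,7,9$, that would require explicit constructions. The statement, however, only asserts the containment $d\in\{5,7,9\}$, so I would not attempt this.
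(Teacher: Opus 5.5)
Your proposal is correct and matches the paper's proof. You use the quoted characterization to get $\mdr(f)=m-1$, then combine the Dimca--Sernesi bound with $\alpha_{\mathcal{CL}}\ge 5/9$ from Lemma~\ref{lem:types} to obtain $m-1\ge (10m-13)/9$, i.e.\ $m\le 4$; together with $d\ge 5$ this gives $d\in\{5,7,9\}$.
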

\begin{proof}
Let $r = {\rm mdr}(f)$. Since $\alpha_{\mathcal{CL}} \geq \frac{5}{9}$, we have
$$ m-1 = r \geq \alpha_{\mathcal{CL}}\cdot d -2  \geq  \frac{10m}{9} - \frac{13}{9},$$
hence $m \leq 4$.
\end{proof}
Finally, combining Theorem \ref{thm:even} and Proposition \ref{odd}, we arrive at the following observation.
\begin{corollary}
There are only finitely many weak-combinatorial types of free conic-line arrangements with ${\rm ADE}$ singularities.
\end{corollary}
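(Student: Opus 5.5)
The plan is to reduce the statement to the degree bounds already proved and then to count the combinatorial data available in each bounded degree. By Theorem~\ref{thm:even}, every free conic-line arrangement with only ADE singularities and even degree $d=2m$ satisfies $d\le 18$. By Proposition~\ref{odd}, in odd degree $d=2m+1\ge 5$ it satisfies $d\le 27$. The remaining small degrees $d\le 3$ with only ADE singularities give finitely many configurations anyway. So the total degree $d$ of a free ADE conic-line arrangement is bounded, say $d\le 27$.

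Next I will show that, for a fixed degree $d$, there are only finitely many possible weak combinatorics. The weak combinatorics consists of two kinds of data.
\begin{enumerate}
\item The pair $(k,\ell)$, where $k$ is the number of lines and $\ell$ the number of conics, subject to $k+2\ell=d$. This leaves finitely many choices.
\item The vector of multiplicities of the singularity types $A_1,A_3,A_5,A_7,D_4,D_6,D_8,D_{10}$. By Lemma~\ref{lem:types}, these are the only types that can occur, so the vector has eight entries.
\end{enumerate}

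Each entry of the vector is bounded. Every singular point has Tjurina number at least $1$, and for a reduced curve $\tau(\mathcal{CL})\le (d-1)^2$; this follows, for instance, from \eqref{eq:DPW-small} or \eqref{eq:DPW-large}. Hence the number of singular points of each type is at most $(d-1)^2$. Alternatively, one can bound the number of singular points by the number of pairwise intersections of components, using B\'ezout's theorem: this is at most $\binom{k+\ell}{2}\cdot 4$.

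It follows that for each $d\le 27$ only finitely many weak-combinatorial types arise. Taking the union over these finitely many degrees gives the claim. There is no serious obstacle. The only point needing care is that the list of singularity types is finite and independent of $d$, which is exactly the content of Lemma~\ref{lem:types}, since the branches are smooth and their contact orders are bounded by $4$. One should also note that the hypothesis $d\ge 4$ or $d\ge5$ in the earlier results does not matter, because the low-degree cases are trivially finite.
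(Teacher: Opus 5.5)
Your proposal is correct and follows essentially the same route as the paper. You bound the degree by $27$ via Theorem~\ref{thm:even} and Proposition~\ref{odd}, then combine the finite list of types from Lemma~\ref{lem:types} with a bound on the number of singular points; your B\'ezout bound is the paper's ``total intersection number'' bound, and your treatment of $d\le 3$ and the bound $\tau\le (d-1)^2$ are harmless extra care that the paper leaves implicit.
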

\begin{proof}
We have already observed that the degree of a free conic-line
arrangement with only ADE singularities is bounded by $27$. Moreover,
by Lemma~\ref{lem:types}, the only possible singularity types are
\[
A_1,\ A_3,\ A_5,\ A_7,\ D_4,\ D_6,\ D_8,\ D_{10}.
\]
For a fixed total degree $d\leq 27$, there are only finitely many
possibilities for the numbers of line and conic components. Furthermore,
the number of singular points of each of the above types is bounded,
for instance by the total intersection number of the irreducible
components. Consequently, only finitely many weak-combinatorial types
can occur.
\end{proof}
\section{On a special pair of conic-line arrangements}

Put $S=\mathbb C[x,y,z]$ and let $\ii^2=-1$.  Define
\begin{align*}
 F={}&y(y-x)(y+x)(y-x-2z)(y-x+2z)\\
 &\qquad\times (y+x-2z)(y+x+2z)
       (x^2-y^2+4\ii yz-4z^2),
\end{align*}
and
\begin{align*}
 G={}&xyz(x-y)(2x-z)(y-z)(2x+y-z)\\
 &\qquad\times \bigl(z^2-yz-2xz+(1-\ii)xy\bigr).
\end{align*}
We set $\mathcal C_F : F=0$ and $\mathcal C_G : G=0$. Obviously the two quadratic factors define smooth conics. 
\begin{theorem}\label{thm:counterexample}
The curves $\mathcal C_F$ and $\mathcal C_G$ have the same weak
combinatorics, namely
\[
 W(\mathcal C_F)=W(\mathcal C_G)
 =(7,1;\,8A_1+D_4+4X_9).
\]
Moreover, $\mathcal C_F$ is free with exponents $(4,4)$, whereas
$ \mdr(G)=3$ and $\mathcal{C}_G$ is nearly free with exponents $(3,6)$.
Thus the pair is a counterexample to the Numerical Terao Conjecture in the class of conic-line arrangements with ordinary quasi-homogeneous singularities. 
\end{theorem}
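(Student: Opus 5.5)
The plan has three parts: first the weak combinatorics of both curves, then the Tjurina number, then the syzygy computations for $F$ and $G$.

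\textbf{Weak combinatorics.} For $\mathcal C_F$, the seven lines are $y=0$, $y=\pm x$, $y-x=\pm 2z$ and $y+x=\pm 2z$. They form three parallel classes, and in the affine chart $z=1$ these meet at infinity at $(1:0:0)$, $(1:1:0)$ and $(1:-1:0)$. I will list all pairwise intersection points of the lines and record how many lines pass through each point. Next I intersect the conic $Q_F=x^2-y^2+4\ii yz-4z^2$ with each line by substituting the line's parametrization and factoring the resulting binary quadric. This tells me whether the conic passes through an existing line-intersection point, meets a line transversally at a new point, or is tangent to it. The target count is:
\begin{itemize}
\item four ordinary quadruple points, which are either four lines or three lines plus the conic;
\item one $D_4$, which is three transversal branches;
\item eight nodes.
\end{itemize}
I do the same for $\mathcal C_G$, where the lines $x,y,z,x-y,2x-z,y-z,2x+y-z$ and the conic $z^2-yz-2xz+(1-\ii)xy$ are explicit.

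As a consistency check, I will count pairwise intersections. The seven lines give $\binom72=21$ intersections, and the conic meets the lines in $14$ more, for a total of $35$. This must equal $\sum_p \binom{m_p}{2}$ over the singular points, and here $4\cdot 6+3+8=35$. It is also worth checking whether the conic is tangent anywhere, since tangency would produce an $A_3$. The stated combinatorics contains no $A_3$, so all contacts must be transversal, which must be confirmed. Finally I need to check that each quadruple point has four pairwise distinct tangent directions, so that it really is $X_9$.

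\textbf{Tjurina number.} Adding up the local contributions gives
\[
\tau = 8\cdot 1 + 4 + 4\cdot 9 = 48.
\]

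\textbf{Freeness of $\mathcal C_F$.} Here $d=9$, and exponents $(4,4)$ require
\[
\tau = 8^2 - 4\cdot 4 = 48,
\]
which matches. So it suffices to show $r=\mdr(F)=4$: since $r=4<9/2$, equality in \eqref{eq:DPW-small} then yields freeness. To get $r\ge 4$, I will try Dimca--Sernesi \eqref{eq:DS} with $\alpha(X_9)=1/2$, but that only gives $r\ge 4.5-2$, so $r\ge 3$. Instead I will rule out $r=3$ directly: a degree-$3$ syzygy would force, by \eqref{eq:DPW-small},
\[
\tau \le 64 - 3\cdot 5 = 49,
\]
which is compatible with $\tau=48$, so this bound does not exclude it. The honest route is therefore a linear-algebra check, which is the main computational step: I solve $af_x+bf_y+cf_z=0$ with $a,b,c$ cubic forms, a linear system in $30$ unknowns, and show it has only the trivial solution. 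I then exhibit two independent degree-$4$ syzygies, or simply compute $\AR(F)$ in \texttt{SINGULAR}. A cleaner confirmation is the Saito criterion: the determinant of the two degree-$4$ syzygies together with the Euler derivation should be a nonzero multiple of $F$.

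\textbf{Near-freeness of $\mathcal C_G$.} The step I expect to be the main obstacle is producing the degree-$3$ syzygy for $G$. I will find it by solving the same linear system in degree $3$, guided by the structure of $\mathcal C_G$: the lines $x$, $2x-z$ and $2x+y-z$ together with $y$ and $z$ suggest a derivation vanishing along a pencil. Once $r=3$ is known, near-freeness follows numerically. A curve is nearly free exactly when
\[
\tau = (d-1)^2 - r(d-r-1) - 1 = 64 - 15 - 1 = 48,
\]
which holds, using the Dimca--Sticlaru characterization for $r<d/2$. Its exponents are then $(r, d-r) = (3,6)$. Since the two curves share weak combinatorics, $\mathcal C_F$ is free and $\mathcal C_G$ is not, which gives the counterexample.
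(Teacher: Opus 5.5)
Your proposal is correct and follows the paper's proof essentially step for step. The shared steps are:
\begin{itemize}
\item the same incidence check, yielding $8A_1+D_4+4X_9$ and $\tau=48$;
\item the Dimca--Sernesi bound $\mdr\ge 3$, which you should state for $G$ as well as for $F$;
\item a computer verification that $\AR(F)_3=0$, plus a quartic syzygy, so that $\tau(\mathcal C_F)=8^2-4\cdot 4$ forces freeness;
\item a cubic syzygy for $G$, so that $\tau(\mathcal C_G)=49-1$ forces near-freeness with exponents $(3,6)$.
\end{itemize}
The only real difference is that the paper writes both syzygies down explicitly, whereas you leave them to be found. Your pencil heuristic for $G$ can be made precise. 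The conics $xy$, $z(2x+y-z)$, $(2x-z)(y-z)$ and the smooth conic all lie in the pencil spanned by $xy$ and $(2x-z)(y-z)$. This gives a quadratic syzygy for that octic, and hence $\mdr(G)\le 3$ once the line $x-y$ is added.
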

\begin{proof}
The seven-line part of $F$ has six ordinary double points and the following five ordinary triple points:
\[
 (0:0:1),\quad
 (1:0:-\tfrac12),\quad
 (1:0:\tfrac12),\quad
 (1:1:0),\quad
 (1:-1:0).
\]
Its conic passes transversely through the last four points and it avoids the first one and all six double points. Thus the first point remains a
$D_4$-point, whereas the other four triple points become ordinary quadruple
points.  The only unused conic-line intersection multiplicities occur on
$y-x=0$ and $y+x=0$; their residual points are
\[
 (1:1:\ii),\qquad (1:-1:-\ii),
\]
and both are transverse.

The seven-line part of $G$ likewise has six ordinary double points and five ordinary triple points. Four of the triple points are
\[
 (0:1:0),\quad (0:1:1),\quad (1:0:0),\quad (1:0:2),
\]
and the conic passes transversely through all the four. The remaining triple point is $(0:0:1)$, which the conic avoids. The only line with unused conic-intersection degree is $x-y=0$.  Restriction of the conic to this line gives
\[
 z^2-3yz+(1-\ii)y^2,
\]
whose discriminant is $5+4\ii\ne0$.  Hence it contributes two distinct
transverse nodes. It follows in both cases that
\[
 n_2=8,\qquad n_3=1,\qquad n_4=4,
\]
or, in singularity notation according to Arnold's catalogue \cite{arnold},
\[
 8A_1+D_4+4X_9.
\]
The total Tjurina number is therefore
\begin{equation}\label{eq:tau-counterexample}
 \tau=8\cdot1+1\cdot4+4\cdot9=48.
\end{equation}
All singularities of the two curves are quasi-homogeneous and the smallest Arnold exponent is now the value $1/2$ which corresponds to $X_9$. Hence
\begin{equation}\label{eq:lower-X9}
 \mdr(H)\ge 9\cdot\frac12-2=\frac52,
\text{ and it implies that }\mdr(H)\ge3
\end{equation}
for $H \in \{F,G\}$.

Now we observe that there is an explicit cubic Jacobian relation for $G$, and this computation can be performed using \texttt{SINGULAR}. Set
\begin{align*}
 a={}&-x(4x-5y)(2x+y-2z),\\
 b={}& y(5x-4y)(2x+y-2z),\\
 c={}& z(10x^2-12xy-xz+5y^2-yz).
\end{align*}
A direct check shows that
\[
 aG_x+bG_y+cG_z=0.
\]
Together with \eqref{eq:lower-X9}, this proves
\[
 \mdr(G)=3.
\]
For $d=9$ and $r=3$, the maximal Tjurina number is
\[
 \tau_{\max}(9,3)=8^2-3(8-3)=49,
\]
and such a curve attaining the maximal values of $\tau_{\max}(9,3)$ would be an $\mathscr{M}$-arrangement of conics and lines in the sense of \cite{MJ}. By \eqref{eq:tau-counterexample} we have
\[
 \tau(\mathcal C_G)=48=\tau_{\max}(9,3)-1.
\]
Using \cite{DimcaMaxTau} we can conclude that $\mathcal C_G$ is nearly free with exponents $(3,6)$, and in particular is not free.

Let us look at $\mathcal{C}_{F}$. Using \texttt{SINGULAR}, we can directly check that $\AR(F)_3=0$, and this means that
\[
 \mdr(F)\geq 4.
\]
We set 
\[
\begin{aligned}
a &= x^{4}-10x^{2}y^{2}+9y^{4}
     +28x^{2}z^{2}-36y^{2}z^{2},\\
b &= -8x^{3}y+8xy^{3}-8xyz^{2},\\
c &= 10x^{3}z-10xy^{2}z-8xz^{3}.
\end{aligned}
\]
These polynomials satisfy the Jacobian syzygy

\[
aF_x+bF_y+cF_z=0,
\]
hence ${\rm mdr}(F)=4$.
Finally,
\[
 \tau(\mathcal C_F)=48=8^2-4(8-4),
\]
so equality holds in \eqref{eq:DPW-small}.  It follows that $\mathcal C_F$
is free with exponents $(4,4)$.
\end{proof}
\begin{remark}
The existence of a free conic-line arrangement with weak combinatorics
\[
W(\mathcal{C})=(7,1;8A_1+D_4+4X_9)
\]
was already established in \cite[Theorem~1.3]{PokoraOneConic}. For the reader's convenience, we present here a substantially simpler defining equation.
\end{remark}
Concluding this note we would like to ask the following question.
\begin{problem}
Does there exist a counterexample to the Numerical Terao Conjecture over $\mathbb{R}$, that is, in $\mathbb{P}^{2}_{\mathbb{R}}$?
\end{problem}
\section*{Funding}
Piotr Pokora is supported by the National Science Centre (Poland) Sonata Bis Grant  \textbf{2023/50/E/ST1/00025.} For the purpose of Open Access, the author has applied a CC-BY public copyright license to any Author Accepted Manuscript (AAM) version arising from this submission.

Piotr Pokora\\
\noindent
Department of Mathematics,
University of the National Education Commission Krakow,
Podchor\c a\.zych 2,
PL-30-084 Krak\'ow, Poland. \\
Email: \url{piotr.pokora@uken.krakow.pl}.
\end{document}